\definecolor{darkblue}{rgb}{0,0,.5}
\DeclareMathOperator{\epi}{epi} 
\newcommand{\argmin}{\text{arg min}} 
\newcommand{\RR}{\mathbb R } 
\newcommand{\ZZ}{\mathbb Z } 
\newcommand{\st}{\,:\,}
\newcommand{\MICP}{MICP\xspace}
\newcommand{\xlp}{\bar{x}}
\newcommand{\xint}{x_0}
\newcommand{\xbdr}{\hat{x}}
\theoremstyle{plain}
\newtheorem{thm}{Theorem}
\newtheorem{prop}[thm]{Proposition}
\newtheorem{lemma}[thm]{Lemma}
\newtheorem{corollary}[thm]{Corollary}
\theoremstyle{definition}
\newtheorem{definition}[thm]{Definition}
\newtheorem{remark}[thm]{Remark}
\newtheorem{XxmpX}[]{Example}
\newenvironment{example}{\pushQED{\qed}\begin{XxmpX}}{\popQED\end{XxmpX}}
\newcommand{\Title}{On the Relation between the Extended Supporting Hyperplane Algorithm and Kelley's Cutting Plane Algorithm}
\newcommand{\myorcidlink}[1]{\,\href{https://orcid.org/#1}{\raisebox{-0.45ex}{\includegraphics[width=1.8ex]{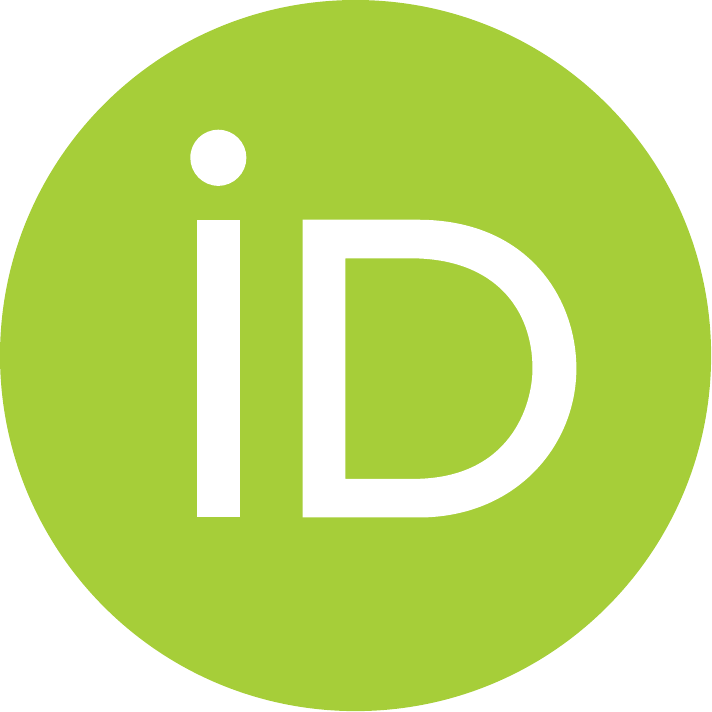}}}}
\begin{document}

\let\pdfoutorg\pdfoutput
\let\pdfoutput\undefined
\ZTPAuthor{%
  Felipe~Serrano\protect\myorcidlink{0000-0002-7892-3951},
  Robert~Schwarz\protect\myorcidlink{0000-0002-2536-026X},
  Ambros~Gleixner\protect\myorcidlink{0000-0003-0391-5903}}
\ZTPTitle{\Title}
\ZTPNumber{19-18}
\ZTPMonth{May}
\ZTPYear{2019}
\ZTPInfo{%
The work for this article has been (partly) conducted within the Research Campus
MODAL funded by the German Federal Ministry of Education and Research (BMBF
grant number 05M14ZAM).\\
The authors thank the Schloss Dagstuhl --- Leibniz Center for Informatics for
hosting the Seminar 18081 "Designing and Implementing Algorithms for
Mixed-Integer Nonlinear Optimization" for providing the environment to develop
the ideas in this paper.\\
The described research activities are funded by the Federal Ministry for
Economic Affairs and Energy within the project EnBA-M (ID: 03ET1549D).
}

\title{\Title}
\author{
  Felipe~Serrano\protect\myorcidlink{0000-0002-7892-3951}\thanks{Zuse Institute Berlin, Takustr.~7, 14195~Berlin,
Germany, \texttt{serrano@zib.de}}\and\
  Robert~Schwarz\protect\myorcidlink{0000-0002-2536-026X}\thanks{Zuse Institute Berlin, Takustr.~7, 14195~Berlin,
Germany, \texttt{schwarz@zib.de}}\and\
  Ambros~Gleixner\protect\myorcidlink{0000-0003-0391-5903}\thanks{Zuse Institute Berlin, Takustr.~7, 14195~Berlin,
Germany, \texttt{gleixner@zib.de}}}
\hypersetup{pdftitle={\Title},
  pdfauthor={Felipe Serrano, Robert Schwarz, Ambros Gleixner}}

\zibtitlepage

\maketitle


\begin{abstract}
Recently, Kronqvist et al.~\cite{KronqvistLundellWesterlund2016} rediscovered
the supporting hyperplane algorithm of Veinott~\cite{Veinott1967} and
demonstrated its computational benefits for solving convex mixed-integer
nonlinear programs.
In this paper we derive the algorithm from a geometric point of view.
This enables us to show that the supporting hyperplane algorithm is equivalent to Kelley's cutting plane
algorithm~\cite{J.E.Kelley1960} applied to a particular reformulation of the
problem.
As a result, we extend the applicability of the supporting hyperplane algorithm
to convex problems represented by general, not necessarily convex,
differentiable functions that satisfy a mild condition.
\end{abstract}

\section{Introduction}

A mixed-integer convex program (\MICP) is a problem of the form
\begin{equation}
    \label{eq:micp_gen}
    \min \{ c^T x \st x \in C \cap (\ZZ^p \times \RR^{n-p}) \}
\end{equation}
where $C$ is a closed convex set, $c \in \RR^n$, and $p$ denotes the number of
variables with integrality requirement.
The use of a linear objective function is without loss of generality given that
one can always transform a problem with a convex objective function into
a problem of the form \eqref{eq:micp_gen}.
We can represent the set $C$ in different ways, one of the most common being
as the intersection of sublevel sets of convex differentiable functions, that
is,
\begin{equation}
  \label{eq:std_rep}
  C = \{ x \in \mathbb{R}^n \st g_j(x) \leq 0, j \in J \}.
\end{equation}
Here, $J$ is a finite index set and each $g_j$ is convex and differentiable.

Several methods have been proposed for solving \MICP.
When the problem is continuous and represented as \eqref{eq:std_rep}, one of the
first proposed methods was Kelley's cutting plane
algorithm~\cite{J.E.Kelley1960}.
This algorithm exploits the convexity of a constraint function $g$ in the
following way.
The convexity and differentiability of $g$ imply that
\mbox{$g(y) + \nabla g(y) (x - y) \leq g(x)$} for every $x,y \in \RR^n$.
Since every feasible point $x$ must satisfy $g(x) \leq 0$, it follows that
\mbox{$g(y) + \nabla g(y) (x - y) \leq 0$},
for a fixed $y$, is a valid linear inequality.
If $\xlp \in \RR^n$ does not satisfy the constraint $g(x) \leq 0$, that is, if
$g(\xlp) > 0$, then
\begin{equation}
    \label{eq:gradcut}
    g(\xlp) + \nabla g(\xlp) (x - \xlp) \leq 0
\end{equation}
separates $\xlp$ from the feasible solution.
In the non-differentiable case,
\begin{equation}
    \label{eq:gradcut_nondiff}
    g(\xlp) + v^T(x - \xlp) \leq 0, \text{ with } v \in \partial g(\xlp),
\end{equation}
is also a separating valid inequality.
We will call both inequalities \eqref{eq:gradcut} and \eqref{eq:gradcut_nondiff}
\emph{gradient cut} of $g$ at $\xlp$.

The idea of Kelley's cutting plane algorithm is to approximate the feasible
region with a polytope, solve the resulting linear program (LP) and, if the LP
solution is not feasible, separate it using gradient cuts to obtain a new
polytope which is a better approximation of the feasible region and repeat, see
Algorithm \ref{alg:kelley}.
{\small
  \begin{algorithm}[h]
    \DontPrintSemicolon
    $LP = \{ x \st x \in [l , u] \}, \xlp \gets \argmin_{x \in LP} c^T x$\\
    \While{ $\max_{j \in J} g_j(\xlp) > \epsilon$ }
    {
      \ForAll{ $j$ such that $g_j(\xlp) > 0$ }
      {
        $LP \gets LP \cap \{ x \st g_j(\xlp) + \nabla g_j(\xlp) (x
        - \xlp) \leq 0 \}$
      }
      $\xlp \gets \argmin_{x \in LP} c^T x$
    }
    \Return $\xlp$
    \caption{{Kelley's cutting plane algorithm}}\label{alg:kelley}
  \end{algorithm}
}%
\\
Kelley shows that the algorithm converges to the optimum and it converges in
finite time to a point close to the optimum.
By solving integer programs (IP) using Gomory's cutting plane~\cite{Gomory1958}
instead of LP relaxations, Kelley shows that his cutting plane algorithm solves
purely integer convex programs in finite time.
The same algorithm works just as well for \MICP.
However, Kelley did not have access to a finite algorithm for solving mixed
integer linear programs (MILP).

In an attempt to speed up Kelley's algorithm, Veinott~\cite{Veinott1967}
proposes the \emph{supporting hyperplane algorithm} (SH).
A possible issue with Kelley's algorithm is that, in general, gradient cuts do
not support the feasible region, see~Figure~\ref{fig:gradient_cut}. 
Therefore, it is expected that better relaxations can be achieved by using
supporting cutting planes.

In order to construct supporting hyperplanes, Veinott suggests to build gradient
cuts at boundary points of $C$.
He uses an interior point of $C$ to find the point on the boundary, $\xbdr$,
that intersects the segment joining the interior point and the solution of the
current relaxation.
Of course, these cuts are automatically supporting hyperplanes of $C$.
However, since the cut is computed at $\xbdr$ which is in $C$, it might
happen that the gradient of the constraints active at $\xbdr$ vanishes.
For this reason, Veinott requires as a further hypothesis that the functions
representing $C$ have non-vanishing gradients at the boundary.
This is immediately implied by, e.g., Slater's condition.
Veinott also identifies that one can use his algorithm to solve
\eqref{eq:micp_gen} when representing $C$ by quasi-convex functions, that is,
functions whose sublevel sets are convex.

Recently, Kronqvist et al.~\cite{KronqvistLundellWesterlund2016} rediscovered
and implemented Veinott's algorithm~\cite{Veinott1967}.
They call their algorithm the \emph{extended supporting hyperplane algorithm} (ESH).
They discuss the practical importance of choosing a good interior point and
propose some improvements over the original method, such as solving LP
relaxations during the first iterations instead of the more expensive MILP
relaxation.
As a result, they present a computationally competitive solver implementation
for MICPs defined by convex differentiable constraint functions.

In this paper, we would like to
understand when, given a convex differentiable function $g$, gradient cuts of
$g$ are supporting to the convex set $S = \{ x \in \RR^n \st g(x) \leq 0 \}$.
This question is motivated by the fact that in this case Kelley's algorithm automatically becomes a supporting
hyperplane algorithm.
In \Cref{thm:supp} we give a necessary and sufficient condition for a gradient
cut of $g$ at a given point to be a supporting hyperplane of $S$.
In particular, this condition suggests to look at \emph{sublinear functions}, i.e.,
convex and positively homogeneous functions. 
As it turns out, this naturally leads to Veinott's algorithm.

Sublinear functions and convex sets are deeply related.
When the origin is in the interior of a convex set $S$, then we can represent $S$ via its
\emph{gauge} function~$\varphi_S$, which is sublinear~\cite{Rockafellar1970}.
We give the formal definition of the gauge function in Section~\ref{sec:gauge},
but for now it suffices to know that we can represent $S$ as $S = \{x \in \RR^n
\st \varphi_S(x) \leq 1\}$ and that, in particular, for every $\xlp \neq 0$ a
gradient cut of $\varphi_S$ at $\xlp$ supports all of its sublevel sets.
The following example illustrates this.

\begin{example} \label{ex:motivation}
  Consider the convex feasible region given by
  \begin{align*}
    S = \{ (x,y) \in \mathbb{R}^2 \st g(x,y) \leq 0\},
  \end{align*} 
  where $g(x,y) = x^2 + y^2 - 1$.
  We show through an example that gradient cuts of $g$ are not necessarily
  supporting to $S$, explain why this happens, and show that changing the
  representation of $S$ to use its gauge function solves the issue.

  Separating the infeasible point $\xlp = (\tfrac{3}{2},\tfrac{3}{2})$ by
  a gradient cut of $g$ at $\xlp$ gives
  \begin{align*}
    g(\xlp) + \nabla g(\xlp) (x - \xlp) &\le 0 \\
    \Leftrightarrow x + y &\le \dfrac{11}{6}.
  \end{align*}
  This cut does not support the circle $S$, see Figure~\ref{fig:gradient_cut}.
  Alternatively, the gauge function of the circle $S$ is given by $\varphi_S(x,y) = \sqrt{x^2
  + y^2}$ and $S = \{ (x,y) \st \sqrt{x^2 + y^2} \leq 1 \}$.
  The gradient cut of $\varphi_S$ at $\xlp$ is $x + y \le \sqrt{2}$, which is
  supporting.
\end{example}

From the previous discussion it is a natural idea to represent $C$ via its gauge
function, namely, $C = \{x \in \RR^n \st \varphi_C(x) \leq 1 \}$.
However, as mentioned before, $C$ is usually given by \eqref{eq:std_rep}.
Our main contribution is to show that reformulating
\eqref{eq:std_rep} to the gauge representation will
naturally lead to the ESH algorithm, see \Cref{subsec:evaluating}.
As a consequence, the convergence proofs of Veinott~\cite{Veinott1967} and
Kronqvist et al.~\cite{KronqvistLundellWesterlund2016} follow directly from the
convergence proof of Kelley's cutting plane algorithm~\cite{J.E.Kelley1960,
HorstTuy1990}, see~\Cref{sec:gauge_convergence}.
In other words, we show that the ESH algorithm is Kelley's cutting plane
algorithm applied to a different representation of the problem.

\begin{figure}[t]
  \centering\begin{tikzpicture}[xscale=1,yscale=1]
    \draw[line width = 0.5pt, blue, fill=blue!15!white] (0,0) circle (1);

    \draw[->] (-2,0) -- (2,0) node[anchor=west] {$x_1$};
    \draw[->] (0,-2) -- (0,2) node[anchor=east] {$x_2$};

    \draw (-1,0) -- (-1,-3pt) node[anchor=north east] {$-1$};
    \draw (1,0) -- (1,-3pt) node[anchor=north west] {$1$};
    \draw (0,-1) -- (-3pt,-1) node[anchor=north east] {$-1$};
    \draw (0,1) -- (-3pt,1) node[anchor=south east] {$1$};

    \node[circle,fill=red,minimum size=3pt,inner sep=0pt] at (1.5, 1.5){};

    \draw[green] plot[domain=-0.1:2.0] (\x ,11.0/6.0 - \x);
\end{tikzpicture}
\hskip 2pt
\begin{tikzpicture}[xscale=1,yscale=1]
    \begin{axis}[
            view={60}{30},
            axis lines = center,
            axis on top,
            xmin=-2, xmax=2, ymin=-2, ymax=2,
            zmin=-4, zmax=1,
            xscale=0.73,yscale=0.73,
            ztick={\empty},
        ]

        \addplot3[domain=0:2*pi,samples y=0,mark=none,blue,fill=blue!15!white,thick]
            ({cos(deg(x))},{sin(deg(x))},{0.});

        \addplot3[surf,opacity=0.2,z buffer=sort,domain=-4:1,
            y domain =0:2*pi,colormap/blackwhite]
            (
            {sqrt(1-x)*cos(deg(y))},
            {sqrt(1-x)*sin(deg(y))},
            x);

        \addplot3[mark options={color=red}, mark=*]
            coordinates {(1.5,1.5,0)};

        \addplot3[mark=none, dashed, color=yellow]
            coordinates {(1.5,1.5,0) (1.5,1.5,-3.5)};

        \addplot3[mark options={color=gray}, mark=*]
            coordinates {(1.5,1.5,-3.5)};

        \addplot3[surf, opacity=0.5,green,domain=0:2.0, y domain = 0:2] (x , y, 11.0/2.0 - 3*x - 3*y);

        \addplot3[line width = 1pt,green,domain=-0.1:2.0, samples y = 0] (x ,11.0/6.0 - x, 0);
    \end{axis}
\end{tikzpicture}
  \caption{
    The feasible region $S$ and the infeasible point $\xlp$ to separate.
    On the left we see that the separating hyperplane is not supporting to
    $S$.
    On the right we see why this happens: the linearization of $g$ at $\xlp$
    is tangent to the epigraph of $g$ (shown upside-down for clarity) at
    $(\xlp, g(\xlp))$.
    However, when this hyperplane intersects the $x$-$y$-plane, it is already
    far away from the epigraph, and in consequence, from the sublevel set.
    The intersection of the hyperplane with the $x$-$y$-plane is the gradient
    cut.
  }
  \label{fig:gradient_cut}
\end{figure}
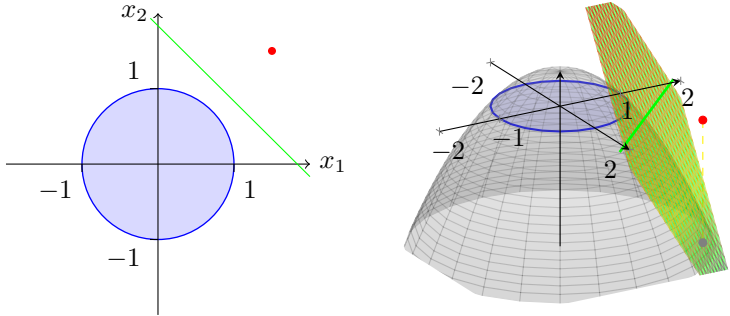

Motivated by this approach of representing
$C$ by its gauge function, we are able to show that the ESH algorithm applied to
\eqref{eq:micp_gen} converges even when $C$ is not represented by convex functions.
This is related to recent work of Lasserre~\cite{Lasserre2009} that tries to
understand how different techniques behave when the convex set $C$ is not
represented via \eqref{eq:std_rep}.
Lasserre considers sets $C = \{ x \st g_j(x) \le 0, j \in
J \}$ where $g_j$ are only differentiable, but not necessarily convex.
Under the assumption
\begin{align} \label{eq:assumption}
  \forall x \in C, \forall j \in J, g_j(x) = 0 \implies \nabla g_j(x) \neq 0,
\end{align}
that is, if the gradients of active constraints do not vanish at the boundary of
$S$, Lasserre shows that the KKT conditions are not only necessary
but also sufficient for global optimality.
In other words, every minimizer is a KKT point and every KKT point is
a minimizer.
Later, Lasserre~\cite{Lasserre2011} proposes an algorithm to find the KKT point
via log-barrier functions.
He shows that the algorithm converges to the KKT
point if \eqref{eq:assumption} holds.

Dutta and Lalitha~\cite{Dutta2011} generalized the previous result to the case
when $C$ is represented by locally Lipschitz functions, not necessarily
differentiable nor convex.\\
We show that the ESH also converges to the global optimum in the setting when
$C$ is described by differentiable functions, under \eqref{eq:assumption}.
This result extends the applicability of the SH algorithm of Veinott.

Finally, we provide a characterization of convex functions  whose linearizations
are supporting to their sublevel sets.
Although elementary, the authors are not aware of its presence in the
literature.
In particular, this result allows us to identify some families of functions for which
gradient cuts are never supporting (see~\Cref{ex:quadratic}) and some for
which they are always supporting (see Examples~\ref{ex:homo}
and~\ref{ex:linear_part}).

\paragraph{Overview of the paper.}
In the remainder of this section we introduce the notation that will be used
throughout the paper.
\Cref{sec:literature} provides a literature review on cutting plane approaches and
efforts on obtaining supporting valid inequalities.
In \Cref{sec:characterization}, we characterize functions whose
linearizations are supporting hyperplanes to their 0-sublevel sets.
\Cref{sec:gauge} introduces the gauge function and shows how to use evaluation
of the gauge function for building supporting hyperplanes.
We note that evaluating the gauge function is equivalent to the line search step
of the ESH algorithm~\cite{Veinott1967, KronqvistLundellWesterlund2016}.
This equivalence provides the link between the ESH and Kelley's cutting plane
algorithm
In \Cref{sec:gauge_convergence}, we show that the cutting planes
generated by the ESH algorithm can also be generated by Kelley's algorithm when
applied to a reformulation of the problem.
This implies that the convergence of the ESH algorithm follows from Kelley's.
In \Cref{sec:nonconvex}, we show that we can apply the ESH algorithm to problem
\eqref{eq:micp_gen} when the convex set $C$ is represented via arbitrary
differentiable functions as long as their gradients do not vanish at the
boundary of $C$.
Finally, \Cref{sec:conclusions} presents our concluding remarks.

\paragraph{Notation and definitions.}
The boundary and the interior of a set $S$ are denoted by $\partial S$ and
$\mathring S$, respectively.
The epigraph of a function $g$ is denoted by $\epi g$.
The subdifferential of a convex function $g$ at $\xlp$ is denoted by $\partial
g(\xlp)$.
Recall that the subdifferential is the set of all subgradients of $g$ at $\xlp$,
\[
  \partial g(\xlp) = \{ v \in \RR^n \st g(\xlp) + v^T (x - \xlp) \le g(x),
  \forall x \in \RR^n \}.
\]
We say that an inequality $\alpha^T x \le \beta$ is \emph{valid} for a set $S$
if every $x \in S$ satisfies $\alpha^T x \le \beta$.
Furthermore, we say that it is a \emph{supporting hyperplane} of $S$, or that it
\emph{supports} $S$, if there is an $x \in \partial S$ such that $\alpha^T
x = \beta$.

\section{Literature review}
\label{sec:literature}

We can think of the algorithms of Kelley~\cite{J.E.Kelley1960} and
Veinott~\cite{Veinott1967} as a mixture of two ingredients: which relaxation to
solve and where to compute the cutting plane.
Indeed, at each iteration, we have a point $x^k$ we would like to separate with
a linear inequality $\beta + \alpha^T(x - x_0) \leq 0$.
For Kelley's algorithm, $x_0 = x^k$, while for Veinott's algorithm, $x_0 \in
\partial C$ and for both $\alpha \in \partial g(x_0)$ and $\beta = g(x_0)$.
Choosing different relaxations and different points where to compute the cutting
planes yields different algorithms.
This framework is developed in Horst and Tuy~\cite{HorstTuy1990}.

Following the previous framework, Duran and Grossmann~\cite{DuranGrossmann1986}
propose the, so-called, outer-approximation algorithm for \MICP.
The idea is to solve an MILP relaxation but instead of computing a cutting
plane at the MILP optimum, or at the boundary point on the segment between the
MILP optimum and some interior point, they suggest to compute cutting planes at
a solution of the nonlinear program (NLP) obtained after fixing the integer
variables to the integer values given by the MILP optimal solution.
This is a much more expensive algorithm but has the advantage of finite
convergence.
Of course, this does not work in complete generality and we need some
assumptions, for example, requiring some constraint qualifications.
Moreover, we must tak care when obtaining an infeasible NLP after fixing the
integer variables in order to prevent the same integer assignment in future
iterations.
To handle such case, Duran and Grossmann propose the use of integer cuts.
However, Fletcher and Leyffer~\cite{FletcherLeyffer1994} point out that this is
not necessary and that we can use the solution of a slack NLP to build
a ``continuous'' cut that separates the integer assignment.

Westerlund and Pettersson~\cite{WesterlundPettersson1995} proposed the so-called
extended cutting plane algorithm.
This algorithm is the extension of Kelley's cutting plane
to \MICP and they show that the algorithm convergences.
Further extensions and convergence proofs of cutting plane and outer approximation
algorithms for non-smooth problems are given
in~\cite{EronenMaekelaeWesterlund2012}.

Yet another technique for producing tight cuts is to build gradient cuts at the
projection of the point to be separated onto $C$~\cite{HorstTuy1990}.
In the same reference, Horst and Tuy show that this algorithm converges.

Finally, there have been attempts at building tighter relaxations by ensuring that
gradient cuts are supporting, in a more general context than convex
mixed-integer nonlinear programming.
Belotti et al.~\cite{BelottiLeeLibertiMargotWaechter2009} consider bivariate
convex constraints of the form $f(x) - y \le 0$, where $f$ is a univariate
convex function.
They propose projecting the point to be separated onto the curve $y = f(x)$ and
building a gradient cut at the projection.
However, their motivation is not to find supporting hyperplanes, but to find the
most violated cut.
Indeed, as we will see, gradient cuts for these type of constraints are always
supporting (\Cref{ex:linear_part}).
Other work along this lines includes~\cite{LubinBienstockVielma2015}, where the authors derive an efficient procedure to
project onto a two dimensional constraint derived from a Gaussian linear chance
constraint, thus building supporting valid inequalities.

\section{Characterization of functions with supporting linearizations}
\label{sec:characterization}
We now give necessary and sufficient conditions for the linearization of
a convex, not necessarily differentiable, function $g$ at a point $\xlp$ to
support the region $S = \{ x \in \RR^n \st g(x) \le 0 \}$.
In order for this to happen, the supporting hyperplane has to support the
epigraph on the whole segment joining the point of $S$ where it supports and
$(\xlp, g(\xlp))$.
In other words, the function must be affine on the segment.
This is due to the convexity of $g$.

\begin{thm} \label{thm:supp}
  Let $g \colon \RR^n \to \RR$ be a convex function, $S = \{ x \in \RR^n \st
  g(x) \le 0 \} \neq \emptyset$, and $\xlp \notin S$.
  There exists a subgradient $v \in \partial g(\xlp)$ such that the
  valid inequality
  \begin{equation} \label{eq:lincut}
    g(\xlp) + v^T (x - \xlp) \le 0
  \end{equation}
  supports $S$, if and only if, there exists $x_0 \in S$ such that $\lambda
  \mapsto g(x_0 + \lambda(\xlp - x_0))$ is affine in $[0,1]$.
\end{thm}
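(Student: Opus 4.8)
The plan is to prove both directions by exploiting the interplay between supporting hyperplanes of the sublevel set $S$ and supporting hyperplanes of the epigraph $\epi g$, as suggested by Figure~\ref{fig:gradient_cut}. The key observation is that the inequality \eqref{eq:lincut} arising from a subgradient $v \in \partial g(\xlp)$ is exactly the trace on $\RR^n$ of the supporting hyperplane $\{(x,t) : t = g(\xlp) + v^T(x-\xlp)\}$ of $\epi g$ at the point $(\xlp, g(\xlp))$.

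For the ``if'' direction, suppose $x_0 \in S$ is such that $\varphi(\lambda) := g(x_0 + \lambda(\xlp - x_0))$ is affine on $[0,1]$. Since $g$ is convex, the one-variable restriction $\varphi$ is convex, and its affineness on $[0,1]$ means the chord and the graph coincide there. First I would choose a subgradient $v \in \partial g(\xlp)$ whose linearization at $\xlp$ agrees with $g$ along the whole segment: concretely, by a standard subgradient argument (e.g., taking a supporting hyperplane of $\epi g$ at $(\xlp, g(\xlp))$ that contains the segment from $(x_0, g(x_0))$ to $(\xlp, g(\xlp))$, which exists because that segment lies in the boundary of $\epi g$ by affineness), one gets $v$ with $g(\xlp) + v^T(x_0 - \xlp) = g(x_0)$. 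Now I parametrize the point $x_0 + \lambda^*(\xlp - x_0)$ and solve for the $\lambda^* \in [0,1]$ at which the affine function $g(x_0) + $ (linear interpolation) equals $0$; since $g(x_0) \le 0$ and $g(\xlp) > 0$, the intermediate value theorem (applied to the affine, hence continuous, $\varphi$) gives such a $\lambda^*$, and the corresponding point $\hat x := x_0 + \lambda^*(\xlp - x_0)$ satisfies $g(\hat x) = 0$, so $\hat x \in \partial S$. Plugging $\hat x$ into \eqref{eq:lincut} and using that $v$'s linearization is exact along the segment yields $g(\xlp) + v^T(\hat x - \xlp) = g(\hat x) = 0$, so the inequality supports $S$ at $\hat x$. (That \eqref{eq:lincut} is valid for $S$ is immediate from $v \in \partial g(\xlp)$ and $g \le 0$ on $S$, and is in any case assumed in the statement.)

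For the ``only if'' direction, suppose $v \in \partial g(\xlp)$ gives an inequality \eqref{eq:lincut} that supports $S$, say with equality at some $\hat x \in \partial S$, so $g(\hat x) = 0$ and $g(\xlp) + v^T(\hat x - \xlp) = 0$. Set $x_0 := \hat x$ and consider $\varphi(\lambda) = g(x_0 + \lambda(\xlp - x_0))$ on $[0,1]$. This $\varphi$ is convex with $\varphi(0) = 0$ and $\varphi(1) = g(\xlp)$. The subgradient inequality $g(\xlp) + v^T(x - \xlp) \le g(x)$ evaluated along the segment gives a lower bound $\ell(\lambda)$ for $\varphi(\lambda)$ that is affine in $\lambda$ and satisfies $\ell(0) = g(\xlp) + v^T(x_0 - \xlp) = 0 = \varphi(0)$ and $\ell(1) = g(\xlp) = \varphi(1)$. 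So $\varphi$ is a convex function lying above the affine function $\ell$ and agreeing with it at both endpoints of $[0,1]$; convexity then forces $\varphi \le \ell$ on $[0,1]$ as well (the graph of a convex function lies below its chord), hence $\varphi = \ell$ is affine on $[0,1]$, as required.

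The main obstacle is the ``if'' direction's construction of the single subgradient $v$ that is simultaneously a valid subgradient at $\xlp$ and exact along the whole segment down to $x_0$. The cleanest way to handle this rigorously is the epigraph picture: the segment from $(x_0, g(x_0))$ to $(\xlp, g(\xlp))$ lies on $\partial(\epi g)$ precisely because $\varphi$ is affine, so there is a supporting hyperplane of $\epi g$ containing this entire segment; its non-vertical form (non-vertical since it passes through $(\xlp, g(\xlp))$ with $\xlp$ not a minimizer, and more carefully one notes $\epi g$ has nonempty interior so a non-vertical supporting hyperplane exists) reads $t \ge g(\xlp) + v^T(x - \xlp)$ for some $v$, and this $v$ is the desired subgradient. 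I would want to state this epigraph lemma explicitly, since conflating the affineness of $\varphi$ with the existence of a globally valid subgradient that is locally exact is exactly the subtle point that distinguishes this characterization from a naive one-dimensional argument.
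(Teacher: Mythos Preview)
Your proposal is correct and follows essentially the same route as the paper's proof: for the ``only if'' direction you take the supporting point $\hat x$, sandwich the convex restriction $\varphi$ between the subgradient lower bound and the chord (which coincide at both endpoints) to force affineness; for the ``if'' direction you use a supporting hyperplane of $\epi g$ that contains the affine segment $\{(x_0+\lambda(\xlp-x_0),\,g(x_0+\lambda(\xlp-x_0))) : \lambda\in[0,1]\}$ to extract the subgradient $v$, and then locate the point where the linearization (equivalently $g$) vanishes along the segment. The only cosmetic differences are that the paper justifies non-verticality of the epigraph hyperplane via the geometry of the segment $A$ (it is parallel neither to the $x$-space nor to the $z$-axis), and that the paper silently uses $g(\hat x)=0$ at the supporting point, which you assert but should note follows from $0 = g(\xlp)+v^T(\hat x-\xlp)\le g(\hat x)\le 0$.
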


\begin{proof}
  ($\Rightarrow$)
  Let $x_0 \in \partial S$ be the point where \eqref{eq:lincut} supports $S$.
  The idea is to show that the affine function $x \mapsto g(\xlp) + v^T (x
  - \xlp)$ coincides $g$ at two points, $\xlp$ and $x_0$.
  Then, by the convexity of $g$, it should coincide with $g$ on the segment
  joining both points.

  In more detail, by definition of $x_0$ we have,
  \begin{equation} \label{eq:localgradcut}
    g(\xlp) + v^T (x_0 - \xlp) = 0.
  \end{equation}
  For $\lambda \in [0,1]$, let $l(\lambda) = x_0 + \lambda (\xlp - x_0)$ and
  $\rho(\lambda) = g(l(\lambda))$.
  Since $g$ is convex and $l$ affine, $\rho$ is convex.

  Since $v$ is a subgradient,
  \[
    g(\xlp) + v^T (l(\lambda) - \xlp) \le \rho(\lambda) \text{ for every
    } \lambda \in [0,1].
  \]
  After some algebraic manipulation and using the fact that $\rho(1) = g(\xlp)
  = v^T (\xlp - x_0)$, we obtain
  \[
    \rho(1) \lambda \le \rho(\lambda).
  \]
  On the other hand, $\rho(0) = 0$ and $\rho(\lambda)$ is convex, thus we have
  $\rho(\lambda) \le \lambda \rho(1) + (1 - \lambda) \rho(0) =  \lambda \rho(1)$
  for $\lambda \in [0,1]$.
  Therefore, $\rho(\lambda) = \rho(1) \lambda$, hence $g(l(\lambda))$ is affine
  in $[0,1]$.

  ($\Leftarrow$)
  The idea is to show that there is a supporting hyperplane $H$ of $\epi
  g \subseteq \RR^{n} \times \RR$ which contains the graph of $g$ restricted to
  the segment joining $x_0$ and $\xlp$, that is, $A = \{ (x_0 + \lambda(\xlp
  - x_0), g(x_0 + \lambda(\xlp - x_0))) \st \lambda \in [0,1] \}$.
  Then, the intersection of such $H$ with $\RR^n \times \{0\}$ will give us
  \eqref{eq:lincut}.

  The set $A$ is a convex nonempty subset of $\epi g$ that does not intersect
  the relative interior of $\epi g$.
  Hence, there exists a supporting hyperplane,
  \[
    H = \{ (x,z) \in \RR^n \times \RR \st v^T x + a z = b \},
  \]
  to $\epi g$ containing $A$ (\cite[Theorem 11.6]{Rockafellar1970}).

  Since $g(x_0) \leq 0$ and $g(\xlp) > 0$, it follows that $A$ is not parallel
  to the $x$-space.
  Therefore, $H$ is also not parallel to the $x$-space and so $v \neq 0$.
  Since $A$ is not parallel to the $z$-axis, it follows that $a \neq 0$.
  We assume, without loss of generality, that $a = -1$.

  The point $(\xlp, g(\xlp))$ belongs to $A \subseteq H$, thus $v^T \xlp
  - g(\xlp) = b$ and $H = \{ (x, g(\xlp) + v^T (x - \xlp)) \st x \in \RR^n \}$.
  Given that $H$ supports the epigraph, then $v$ is a subgradient of $g$, in
  particular,
  \[
    g(\xlp) + v^T (x - \xlp) \le g(x) \text{ for every } x \in \RR^n.
  \]
  Let $z(x)$ be the affine function whose graph is $H$, that is, $z(x)
  = g(\xlp) + v^T (x - \xlp)$.
  We now need to show that $g(\xlp) + v^T (x - \xlp) \le 0$ supports $S$ by
  exhibiting an $\xbdr \in S$ such that $g(\xlp) + v^T (\xbdr - \xlp) = 0$.
  By construction,  $z(x_0 + \lambda(\xlp - x_0)) = g(x_0 + \lambda(\xlp
  - x_0))$.
  Since $z(x_0 + \lambda(\xlp - x_0))$ is non-positive for $\lambda = 0$ and
  positive for $\lambda = 1$, it has to be zero for some $\lambda_0$. Let $\xbdr
  = x_0 + \lambda_0(\xlp - x_0)$.
  Therefore, $g(\xbdr) = z(\xbdr) = 0$ and we conclude that $\xbdr \in S$ and
  $g(\xlp) + v^T (\xbdr - \xlp) = 0$.
\end{proof}

Specializing the theorem to differentiable functions directly leads to the
following:
\begin{corollary} \label{cor:supp}
  Let $g \colon \RR^n \to \RR$ be a convex differentiable function, $S = \{
  x \in \RR^n \st g(x) \le 0 \}$, and $\xlp \notin S$.
  Then the valid inequality
  \begin{equation*}
    g(\xlp) + \nabla g(\xlp)^T (x - \xlp) \le 0
  \end{equation*}
  supports $S$, if and only if, there exists $x_0 \in S$ such that $\lambda
  \mapsto g(x_0 + \lambda(\xlp - x_0))$ is affine in $[0,1]$.
\end{corollary}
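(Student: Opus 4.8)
The plan is to obtain \Cref{cor:supp} as an immediate consequence of \Cref{thm:supp}. The only gap to bridge is the difference between the phrasing ``there exists a subgradient $v \in \partial g(\xlp)$ such that \eqref{eq:lincut} supports $S$'' in the theorem and the explicit gradient cut $g(\xlp) + \nabla g(\xlp)^T (x - \xlp) \le 0$ in the corollary.

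First I would invoke the standard fact that a convex function $g \colon \RR^n \to \RR$ that is differentiable at $\xlp$ has a singleton subdifferential there, namely $\partial g(\xlp) = \{\nabla g(\xlp)\}$ (see, e.g., \cite[Theorem~25.1]{Rockafellar1970}). Hence the existential quantifier in \Cref{thm:supp} ranges over a one-element set, so ``there exists $v \in \partial g(\xlp)$ such that \eqref{eq:lincut} supports $S$'' is logically equivalent to ``the gradient cut $g(\xlp) + \nabla g(\xlp)^T (x - \xlp) \le 0$ supports $S$''. With this identification the equivalence of the corollary is exactly the equivalence of the theorem, and the subgradient inequality \eqref{eq:gradcut} is recovered as the special case of \eqref{eq:gradcut_nondiff} with $v = \nabla g(\xlp)$, so validity of the cut is also inherited.

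I do not expect any real obstacle: the argument is essentially a one-line reduction. The only points requiring a moment's care are the invocation of the singleton-subdifferential characterization of differentiability for finite-valued convex functions, and the degenerate case $S = \emptyset$. In that degenerate case $\partial S = \emptyset$, so no hyperplane supports $S$, while at the same time there is no $x_0 \in S$; thus both sides of the claimed equivalence are false and it holds vacuously. When $S \neq \emptyset$, \Cref{thm:supp} applies directly and yields the statement.
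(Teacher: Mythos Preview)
Your approach is correct and essentially identical to the paper's one-line proof, which simply notes that differentiability forces $\partial g(\xlp) = \{\nabla g(\xlp)\}$ and then invokes \Cref{thm:supp}. Your explicit treatment of the degenerate case $S = \emptyset$ is a nice touch that the paper's proof omits (the corollary, unlike the theorem, does not explicitly assume $S \neq \emptyset$).
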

\begin{proof}
  Since $g$ is differentiable, the subdifferential of $g$ consists only of the
  gradient of $g$.
\end{proof}

A natural candidate for functions with supporting gradient cuts at every point
are functions whose epigraph is a translation of a convex cone.

\begin{example}[Sublinear functions] \label{ex:homo}
  Let $h(x)$ be a sublinear function, that is, convex and positively homogeneous
  function, i.e., $h(\lambda x) = \lambda h(x)$ for any $\lambda \ge 0$.
  For this type of functions, gradient cuts always support $S = \{ x \st
  h(x) \le c \}$, for any $c \geq 0$.
  This follows directly from \Cref{thm:supp}, since $0 \in S$ and $h(\lambda
  \xlp)$ is affine for any $\xlp$.
\end{example}

However, these are not the only functions that satisfy the conditions of
\Cref{thm:supp} for every point.
The previous theorem implies that linearizations always support the constraint
set if a convex constraint $g(x) \leq 0$ is linear in one of its arguments.
\begin{example}[Functions with linear variables] \label{ex:linear_part}
  Let $f \colon \RR^m \times \RR^n \to \RR$ be a convex function of the form
  $f(x, y) = g(x) + a^T y + c$, with $a \neq 0$ and $g \colon \RR^m \to \RR$
  convex.
  Then gradient cuts support $S = \{(x,y) \st f(x,y) \le 0\}$.
  Indeed, assume without loss of generality that $a_1 > 0$ and let $(\xlp, \bar
  y) \notin S$.
  Then there is a $\lambda > 0$ such that $f(\xlp, \bar y - \lambda e_1)
  = g(\xlp) + a^T \bar y + c - a_1 \lambda = 0$.
  The statement follows from \Cref{thm:supp}.

  Consider separating a point $(x_0, z_0)$ from a constraint of the form $z
  = g(x)$ with $g \colon \RR \to \RR$ and convex, with $z_0 < g(x_0)$ (that is,
  separating on the convex constraint $g(x) \leq z$).
  As mentioned earlier, in~\cite{BelottiLeeLibertiMargotWaechter2009} the
  authors suggest projecting $(x_0, z_0)$ to the graph $z = g(x)$ and computing
  a gradient cut there.
  This example shows that this step is unnecessary when the sole purpose is to
  obtain a cut that is supporting to the graph.
\end{example}

In contrast, if $g(x)$ is strictly convex, linearizations at points $x$ such
that $g(x)~\neq~0$ are never supporting to $g(x) \leq 0$.
This follows directly from \Cref{thm:supp} since $\lambda \mapsto g(x + \lambda
v)$ is not affine for any $v$.
We can also characterize convex quadratic functions with supporting
linearizations.
\begin{example}[Convex quadratic functions] \label{ex:quadratic}
  Let $g(x) = x^T A x + b^T x + c$ be a convex quadratic function, i.e., $A$ is
  an $n$ by $n$ symmetric and positive semi-definite matrix.
  We show that gradient cuts support $S = \{ x \in \RR^n \st g(x) \leq 0 \}$, if
  and only if, $b$ is not in the range of $A$, i.e., $b \notin R(A):= \{ Ax \st
  x \in \RR^n \}$.

  First notice that $l_v(\lambda) := g(x + \lambda v)$ is affine linear, if and
  only if, $v \in \ker(A)$.

  Let $v \in \ker(A)$ and $\xlp \notin S$.
  Clearly, there is a $\lambda \in \RR$ such that $\xlp + \lambda v \in S$ if
  and only if $l_v$ is not constant.
  Thus, gradient cuts are \emph{not} supporting, if and only if, $l_v$ is
  constant for every $v \in \ker(A)$.
  But $l_v$ is constant for every $v \in \ker(A)$, if and only if, $b^T v = 0$
  for every $v \in \ker(A)$, which is equivalent to $b \in \ker(A)^{\perp}
  = R(A^T) = R(A)$, since $A$ is symmetric.
  Hence, gradient cuts support $S$, if and only if, $b \notin R(A)$.

  In particular, if $b = 0$, i.e., there are no linear terms in the quadratic
  function, then gradient cuts are never supporting hyperplanes.
  Also, if $A$ is invertible, $b \in R(A)$ and gradient cuts are not supporting.
  This is to be expected since in this case $g$ is strictly convex.
\end{example}

\section{The gauge function} \label{sec:gauge}
Given a \MICP like \eqref{eq:micp_gen}, we can reformulate it to an equivalent
\MICP with a unique constraint for which every linearization supports the
continuous relaxation of the feasible region.
For this, we can use any sublinear function whose 1-sublevel set is $C$.
Each convex set $C$ has at least one sublinear function that represents it,
namely, the \emph{gauge function}~\cite{Rockafellar1970} of $C$.

\begin{definition}
  Let $C \subseteq \RR^n$ be a convex set such that $0 \in \mathring C$.
  The \emph{gauge} of $C$ is
  \[
    \varphi_C (x) = \inf \left\{\  t > 0 \st  x \in t C\ \right\}.
  \]
\end{definition}

The following basic properties of gauge functions make them appealing
for generating supporting hyperplanes.

\begin{prop}[{\cite[Proposition 1.11]{Tuy2016}}]
  \label{prop:gauge_properties}
  Let $C \subseteq \RR^n$ be a convex set such that $0 \in \mathring C$,
  then $\varphi_C(x)$ is sublinear.
  If, in addition, $C$ is closed, then it holds that
  \[
    C = \{ x \in \RR^n \st \varphi_C(x) \le 1 \}
  \]
  and
  \[
    \partial C = \{ x \in \RR^n \st \varphi_C(x) = 1 \}.
  \]
\end{prop}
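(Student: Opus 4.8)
The plan is to prove the three assertions directly from the definition $\varphi_C(x) = \inf\{t > 0 : x \in tC\}$, using only that $C$ is convex with $0 \in \mathring C$ (and, for the last two parts, that $C$ is closed). First I would record a monotonicity property of the dilations of $C$ that underlies everything: since $0 \in C$ and $C$ is convex, $s'C \subseteq sC$ whenever $0 < s' \le s$, because $s'c = s\bigl(\tfrac{s'}{s}c + (1-\tfrac{s'}{s})0\bigr) \in sC$ for $c \in C$. In particular $x \in tC$ for every $t > \varphi_C(x)$, so the infimum defining $\varphi_C$ is well behaved. Finiteness of $\varphi_C$ is then immediate: a ball $B(0,r) \subseteq C$ gives $\varphi_C(x) \le \|x\|/r$ for all $x$. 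Positive homogeneity splits into the case $\lambda > 0$, handled by the substitution $s = t/\lambda$ in the infimum, and $\lambda = 0$, where $\varphi_C(0) = 0$ because $0 \in tC$ for all $t > 0$.

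Next I would show subadditivity, which together with positive homogeneity yields convexity and hence sublinearity. Given $s > \varphi_C(x)$ and $t > \varphi_C(y)$, the monotonicity above gives $x/s, y/t \in C$, and then $\tfrac{x+y}{s+t} = \tfrac{s}{s+t}\cdot\tfrac{x}{s} + \tfrac{t}{s+t}\cdot\tfrac{y}{t} \in C$ by convexity of $C$, so $x+y \in (s+t)C$ and $\varphi_C(x+y) \le s+t$; taking infima over $s$ and $t$ gives $\varphi_C(x+y) \le \varphi_C(x) + \varphi_C(y)$.

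For the identity $C = \{x : \varphi_C(x) \le 1\}$, the inclusion ``$\subseteq$'' is just $x \in 1\cdot C$. For ``$\supseteq$'': if $\varphi_C(x) < 1$ then $x \in 1\cdot C = C$ by monotonicity, and if $\varphi_C(x) = 1$ then $x/t \in C$ for every $t > 1$, so letting $t \downarrow 1$ and using that $C$ is closed gives $x \in C$; this is the only place closedness enters. Finally, for $\partial C = \{x : \varphi_C(x) = 1\}$ I would use that a finite convex function on $\RR^n$ is continuous, so $\{\varphi_C < 1\}$ is open and contained in $C$, hence in $\mathring C$; conversely, if $x \in \mathring C$ then $(1+\varepsilon)x \in C$ for small $\varepsilon > 0$, whence $(1+\varepsilon)\varphi_C(x) \le 1$ and $\varphi_C(x) < 1$. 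Thus $\mathring C = \{\varphi_C < 1\}$, and since $C$ is closed, $\partial C = C \setminus \mathring C = \{\varphi_C = 1\}$.

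There is no genuine obstacle here, as the statement is classical; the one point that must be handled with care, and that is reused in all three parts, is the dilation monotonicity $s'C \subseteq sC$. Once that is in place the infimum in the definition of $\varphi_C$ behaves like a minimum in the relevant limits and the remaining arguments are routine.
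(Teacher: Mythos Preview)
Your argument is correct and self-contained: the dilation monotonicity $s'C \subseteq sC$ for $0 < s' \le s$ (using $0 \in C$ and convexity) is exactly the right lemma, and from it finiteness, positive homogeneity, subadditivity, the level-set identity, and the boundary characterization all follow as you describe. The only step worth a second glance is the boundary part, and your handling is clean: continuity of the finite convex function $\varphi_C$ gives $\{\varphi_C < 1\}$ open, and the converse via $(1+\varepsilon)x \in C$ pins down $\mathring C = \{\varphi_C < 1\}$, after which $\partial C = C \setminus \mathring C$ uses closedness of $C$ once more.

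Regarding comparison with the paper: the paper does not actually prove this proposition. It is quoted from Tuy's monograph (hence the citation in the proposition header) and used as a black box. So your write-up supplies what the paper deliberately omits; there is no alternative proof in the paper to compare against.
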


\Cref{ex:homo} tells us that sublinear functions always generate supporting
hyperplanes.

\subsection{Using the gauge function for separation}

Even though the gauge function is exactly what we need to ensure supporting
gradient cuts, in general, there is no closed-form formula for it.
Therefore, it is not always possible to explicitly reformulate a constraint
$g(x) \leq 0$ as $\varphi(x) \le 1$.

Furthermore, if one is interested in solving mathematical programs with
a numerical solver, performing such a reformulation might introduce some
numerical issues one would have to take care of.
Solvers usually solve up to a given tolerance, that is, they solve $g_j(x) \le
\varepsilon$ for some $\varepsilon > 0$.
Then, even though $C = \{x \st \varphi_C(x) \le 1 \}$, it might be that $\{
x \in \RR^n \st g_j(x) \le \varepsilon \} \nsubseteq \{x \in \RR^n \st
\varphi(x) \le 1 + \varepsilon \}$.
In fact, even simple constraints show this behavior.
Consider $C = \{ x \st x^2 - 1 \leq 0 \}$.
In this case, $\varphi_C(x) = |x|$ and for $x_0 = 1 + \varepsilon$, we have
$\varphi(x_0) = 1 + \varepsilon$.
Then, $x_0$ would be $\varepsilon$-feasible for $\varphi_C(x) \le 1$, although
it would be infeasible for $x^2 -1 \le 0$, since $2 \varepsilon + \varepsilon^2
> \varepsilon$.

Luckily, one does not need to reformulate in order to take advantage of the
gauge function for tighter separation.
The next propositions show how to use the gauge function and a point $\xlp
\notin C$ to obtain a boundary point of $C$ and that linearizing at that
boundary point gives a supporting valid inequality that actually separates
$\xlp$.
For ensuring the existence of a supporting hyperplane we need the following
condition 
\begin{equation} \label{eq:condition}
  \forall j \in J, \forall x \in \partial C, \nabla g_j(x) \neq 0
\end{equation}
For example, this condition is satisfied whenever Slater's condition is
satisfied for \eqref{eq:micp_gen} with $C$ represented by \eqref{eq:std_rep},
that is, when there exists $\xint$ such that $g_j(\xint) < 0$ for every $j \in
J$.

Before we state the propositions we start with a simple lemma.
\begin{lemma} \label{lemma:supp_sepa}
  Let $C \subseteq \RR^n$ be a closed convex set such that $0 \in \mathring C$,
  let $\xbdr \in \partial C$ and $\xlp \notin C$.
  Let $\alpha^T x \leq \beta$ be a valid inequality for $C$ that supports $C$ at
  $\xbdr$.
  If the segment joining $0$ and $\xlp$ contains $\xbdr$, then the inequality
  separates $\xlp$ from $C$.
\end{lemma}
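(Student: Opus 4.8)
The plan is to use the support hyperplane at $\xbdr$ to build a valid inequality and then exploit the collinearity of $0$, $\xbdr$, and $\xlp$ together with the fact that $0 \in \mathring C$ to force $\xlp$ to violate it. First I would record what the hypotheses give us: the inequality $\alpha^T x \le \beta$ is valid for $C$, so in particular $\alpha^T 0 = 0 \le \beta$, i.e.\ $\beta \ge 0$. Moreover, since $0 \in \mathring C$ and $\xbdr \in \partial C$, the support point $\xbdr$ is nonzero, and in fact $\alpha^T 0 = 0 < \beta$ strictly: if $\beta = 0$ then validity would give $\alpha^T x \le 0$ for all $x$ in a neighborhood of $0$, forcing $\alpha = 0$, and then $\alpha^T x \le \beta$ would be vacuous and could not support $C$ at a boundary point in any meaningful way — more carefully, $\alpha = 0, \beta = 0$ makes the hyperplane $\{x : 0 = 0\}$ degenerate, so we may assume $\alpha \neq 0$ and hence $\beta > 0$.

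Next I would parametrize the segment from $0$ to $\xlp$ as $\lambda \mapsto \lambda \xlp$ for $\lambda \in [0,1]$, and write $\xbdr = \mu \xlp$ for some $\mu \in (0,1]$, which exists by the assumption that the segment joining $0$ and $\xlp$ contains $\xbdr$ (here $\mu \neq 0$ since $\xbdr \neq 0$). Since $\alpha^T x \le \beta$ supports $C$ at $\xbdr$, we have $\alpha^T \xbdr = \beta$, i.e.\ $\mu\, \alpha^T \xlp = \beta$, hence $\alpha^T \xlp = \beta/\mu$. Now I would also argue that $\mu < 1$: if $\mu = 1$ then $\xbdr = \xlp$, but $\xbdr \in \partial C \subseteq C$ while $\xlp \notin C$, a contradiction. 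Therefore $\mu \in (0,1)$, and since $\beta > 0$ we get $\alpha^T \xlp = \beta/\mu > \beta$, which is exactly the statement that $\xlp$ violates the valid inequality $\alpha^T x \le \beta$. Hence the inequality separates $\xlp$ from $C$.

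The only genuinely delicate point is handling the degenerate case where the supporting hyperplane is given with $\alpha = 0$ (equivalently $\beta \le 0$ forced by $0 \in C$); I expect to dispose of this by noting that a supporting hyperplane of a convex set with nonempty interior at a boundary point must have $\alpha \neq 0$ (otherwise it does not separate the interior from anything and cannot be said to support the set in the sense defined in the Notation paragraph), so $\beta = \alpha^T 0 < \beta$ is impossible unless $\beta > 0$ — wait, more simply: $0 \in \mathring C$ means there is a ball around $0$ in $C$, so $\alpha^T x \le \beta$ holding on that ball with $\alpha \neq 0$ forces $\beta > 0$. This is the main obstacle, but it is a routine convexity observation. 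Everything else is the one-line scaling computation $\alpha^T \xlp = \beta/\mu$ with $0 < \mu < 1$.
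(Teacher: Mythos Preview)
Your argument is correct and follows essentially the same route as the paper: parametrize the segment by $\lambda \mapsto \lambda \xlp$, use $0 \in \mathring C$ to get $\beta > 0$ (equivalently, the paper's $l(0) < 0$), use $\xbdr = \mu \xlp$ with $\mu \in (0,1)$ and $\alpha^T \xbdr = \beta$, and conclude $\alpha^T \xlp > \beta$. The only difference is presentational --- the paper phrases it as ``$l$ is strictly increasing affine'' rather than computing $\alpha^T \xlp = \beta/\mu$ --- and you are more explicit about ruling out $\alpha = 0$, which the paper leaves implicit in the notion of a supporting hyperplane.
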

\begin{proof}
  Consider $l(\lambda) = \alpha^T( \lambda \xlp ) - \beta$ and let $\lambda_0
  \in (0,1)$ be such that $\lambda_0 \xlp = \xbdr$.
  The function $l$ is a strictly increasing affine linear function.
  Indeed, $0 \in \mathring C$ implies that $l(0) < 0$, while $l(\lambda_0) = 0$.
  Thus, $l(1) > 0$, i.e., $\alpha^T \xlp > \beta$
\end{proof}

\begin{prop} \label{prop:gauge_boundary}
  Let $C \subseteq \RR^n$ be a closed convex set such that $0 \in \mathring C$
  and let $\xlp \notin C$.
  Then, $\frac{\xlp}{\varphi_C(\xlp)} \in \partial C$.
\end{prop}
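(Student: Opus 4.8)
The plan is to use the definition of the gauge function directly, together with the closedness of $C$ and the property from \Cref{prop:gauge_properties} that $\partial C = \{x \st \varphi_C(x) = 1\}$. So it suffices to show that $\varphi_C(\xlp / \varphi_C(\xlp)) = 1$. First I would record that since $\xlp \notin C$ and $0 \in \mathring C$, we have $\varphi_C(\xlp) > 1 > 0$, so the expression $\xlp/\varphi_C(\xlp)$ makes sense; in fact I should note $\varphi_C(\xlp)$ is finite because $0 \in \mathring C$ guarantees $x \in tC$ for all sufficiently large $t$.

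Next I would invoke positive homogeneity of $\varphi_C$ (from \Cref{prop:gauge_properties}, since $\varphi_C$ is sublinear): for $t = \varphi_C(\xlp) > 0$,
\[
  \varphi_C\!\left(\frac{\xlp}{\varphi_C(\xlp)}\right) = \frac{1}{\varphi_C(\xlp)}\,\varphi_C(\xlp) = 1.
\]
Then by the characterization of $\partial C$ in \Cref{prop:gauge_properties}, which applies since $C$ is closed convex with $0 \in \mathring C$, this yields $\xlp/\varphi_C(\xlp) \in \partial C$, as desired.

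Alternatively, if one prefers not to lean on the full strength of \Cref{prop:gauge_properties}, one can argue from the infimum definition: the set $\{t > 0 \st \xlp \in tC\}$ is of the form $[\varphi_C(\xlp), \infty)$ because $C$ is closed (so the infimum is attained) and starlike with respect to $0$ (so the set of valid $t$ is an interval unbounded above). Attainment gives $\xlp \in \varphi_C(\xlp)\,C$, i.e. $\xlp/\varphi_C(\xlp) \in C$; and minimality together with $\varphi_C(\xlp/\varphi_C(\xlp)) = 1$ shows the point cannot lie in $\mathring C$, since any interior point $y$ has $\varphi_C(y) < 1$. The only mild obstacle is making the "infimum is attained" step rigorous, which is where closedness of $C$ is essential; everything else is routine manipulation of the gauge's homogeneity.
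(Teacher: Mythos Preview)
Your proposal is correct and follows essentially the same approach as the paper: show $\varphi_C(\xlp)\neq 0$, use positive homogeneity to get $\varphi_C(\xlp/\varphi_C(\xlp))=1$, and conclude via the boundary characterization in \Cref{prop:gauge_properties}. Your extra remarks on finiteness and the alternative infimum-based argument are fine elaborations but not needed beyond what the paper does.
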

\begin{proof}
  First, $\varphi_C(\xlp) \neq 0$ since $\xlp \notin C$.
  The positive homogeneity of $\varphi_C$ implies that
  $\varphi_C\left(\frac{\xlp}{\varphi_C(\xlp)}\right)
  = \frac{\varphi_C(\xlp)}{\varphi_C(\xlp)} = 1$.
  \Cref{prop:gauge_properties} implies $\frac{\xlp}{\varphi_C(\xlp)} \in \partial
  C$.
\end{proof}

Let $J_0(x)$ be the set of indices of the active constraints at $x$, i.e.,
$J_0(x) = \{j \in J \st g_j(x) = 0\}$.

\begin{prop} \label{prop:use_gauge}
  Let $C = \{ x \st g_j(x) \le 0, j \in J\}$ be such that $0 \in \mathring C$
  and let $\varphi_C$ be its gauge function.
  Assume that \eqref{eq:condition} holds.
  Given $\xlp \notin C$, define $\xbdr = \frac{\xlp}{\varphi_C(\xlp)}$.
  Then, for any $j \in J_0(\xbdr)$, the gradient cut of $g_j$ at $\xbdr$ yields
  a valid supporting inequality for $C$ that separates $\xlp$.
\end{prop}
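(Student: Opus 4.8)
The plan is to establish in turn the four assertions hidden in the statement — that the cut in question is a genuine hyperplane, that it is valid for $C$, that it supports $C$, and that it separates $\xlp$ — relying on \Cref{prop:gauge_boundary}, \Cref{lemma:supp_sepa}, and the properties of $\varphi_C$ as the only real inputs.

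First I would apply \Cref{prop:gauge_boundary} to get $\xbdr = \xlp/\varphi_C(\xlp) \in \partial C$. In particular $J_0(\xbdr) \neq \emptyset$, so the statement is non-vacuous, and for $j \in J_0(\xbdr)$ we have $g_j(\xbdr) = 0$, so the gradient cut of $g_j$ at $\xbdr$ reads simply $\nabla g_j(\xbdr)^T(x - \xbdr) \le 0$. Assumption \eqref{eq:condition} yields $\nabla g_j(\xbdr) \neq 0$, which is exactly what makes this an honest linear inequality rather than the trivial $0 \le 0$. Validity for $C$ is then immediate from convexity and differentiability of $g_j$: for every $x \in C$,
\[
  g_j(\xbdr) + \nabla g_j(\xbdr)^T(x - \xbdr) \le g_j(x) \le 0,
\]
and since $g_j(\xbdr) = 0$ the left-hand side is exactly the cut.

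Next I would observe that the cut supports $C$: evaluating its left-hand side at $x = \xbdr$ gives $g_j(\xbdr) = 0$, so equality holds at $\xbdr$, and $\xbdr \in \partial C$ by the first step — which is precisely the definition of a supporting hyperplane. Finally, for separation I would invoke \Cref{lemma:supp_sepa} with $\alpha^T x \le \beta$ taken to be the cut just obtained: since $\xlp \notin C$, \Cref{prop:gauge_properties} gives $\varphi_C(\xlp) > 1$, hence $\xbdr = \lambda_0 \xlp$ with $\lambda_0 = 1/\varphi_C(\xlp) \in (0,1)$, so $\xbdr$ lies on the segment joining $0$ and $\xlp$. The lemma then upgrades ``valid and supporting at $\xbdr$'' to the strict inequality at $\xlp$, i.e.\ $\xlp$ is cut off, completing the argument.

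I do not anticipate a substantial obstacle: all the geometry has been packaged into the earlier results, and what remains is essentially bookkeeping. The single place where the proof would genuinely fail without the hypothesis is the non-degeneracy step $\nabla g_j(\xbdr) \neq 0$, which is exactly why \eqref{eq:condition} is imposed; if some active constraint had a vanishing gradient at $\xbdr$ one would have to pick a different active index, and if all of them did there would be no supporting gradient cut at all.
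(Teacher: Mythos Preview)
Your proof is correct and follows the same route as the paper's own argument: use \Cref{prop:gauge_boundary} to place $\xbdr$ on the boundary, read off validity and support from the gradient inequality together with $g_j(\xbdr)=0$, and then invoke \Cref{lemma:supp_sepa} for separation. The paper compresses the validity and support steps into a single ``Clearly,'' whereas you spell them out and also make explicit the role of \eqref{eq:condition} in guaranteeing a nontrivial hyperplane; otherwise the two proofs are the same.
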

\begin{proof}
  By the previous proposition, we have that $\xbdr \in \partial C$.
  Let $j \in J_0(\xbdr)$.
  Clearly, the gradient cut of $g_j$ at $\xbdr$ yields a valid supporting
  inequality.
  The fact that it separates follows from \Cref{lemma:supp_sepa}.
\end{proof}

Hence, we can get supporting valid inequalities separating a given point $\xlp
\notin C$ by using the gauge function to find the point $\xbdr
= \tfrac{\xlp}{\varphi_C(\xlp)}\in \partial C$.
Then, \Cref{prop:use_gauge} ensures that the gradient cut of any active
constraint at $\xbdr$ will separate $\xlp$ from $C$.
But, how do we compute $\varphi_C(\xlp)$?

\subsection{Evaluating the gauge}
\label{subsec:evaluating}
Let $C = \{ x \st g_j(x) \le 0, j \in J\}$ be a closed convex set such that $0
\in \mathring C$ and consider
\begin{equation} \label{eq:f}
  f(x) = \max_{j \in J} g_j(x).
\end{equation}
In general, evaluating the gauge function of $C$ at $\xlp \notin C$ is
equivalent to solving the following one dimensional equation
\begin{equation} \label{eq:eval_gauge}
  f(\lambda \xlp) = 0,\ \lambda \in (0,1).
\end{equation}
If $\lambda^*$ is the solution, then $\varphi_C(\xlp) = \frac{1}{\lambda^*}$.

One can solve such an equation using a line search.
Note that the line search is looking for a point $\xbdr \in \partial C$ on the
segment between 0 and $\xlp$.
This is exactly what the (extended) supporting hyperplane algorithm performs when it
uses 0 as its interior point.

We would also like to remark that a closed-form formula expression for the gauge
function of $C$ is equivalent to a closed-form formula for the solution of
\eqref{eq:eval_gauge}.
It is possible to find such a formula for some functions, e.g., when $f$ is
a convex quadratic function.

Next, we briefly discuss what happens when 0 is not in the interior of $C$ and
when $C$ has no interior.
In the next section we discuss the implications of the fact that evaluating the
gauge function is equivalent to the line search step of the supporting
hyperplane algorithm.

\subsection{The case $\mathring C = \emptyset$ and using a nonzero interior
point}
When $\mathring C = \emptyset$, we can still use the methods discussed above
using a trick from~\cite{KronqvistLundellWesterlund2016}.
Assuming $C = \{ x \in \RR^n \st g_j(x) \leq 0, j \in J\} \neq \emptyset$,
consider the set $C_{\epsilon} = \{ x \in \RR^n \st g_j(x) \leq \epsilon, j \in
J\}$.
This set satisfies $\mathring C_{\epsilon} \neq \emptyset$ and optimizing
over $C_{\epsilon}$ provides an $\epsilon$-optimal solution.

If $\xint \in \mathring C$ and $\xint \neq 0$, we can translate $C$ so that 0 is in
its interior.
Equivalently, we can build a gauge function centered on $\xint$.
This is given by
\begin{equation*}
  \varphi_{\xint, C}(x) = \varphi_{C-\xint}(x-\xint)
\end{equation*}
Then, given $\xlp \notin C$,
\begin{equation} \label{eq:shift-gauge}
  \xbdr = \frac{\xlp - \xint}{\varphi_{C-\xint}(\xlp-\xint)} + \xint
\end{equation}
belongs to the boundary of $C$.
Equivalently, $\xbdr$ is $\xint + \lambda^* (\xlp - \xint)$, where $\lambda^*$
solves
\begin{equation*}
  f(\xint +\lambda (\xlp - \xint)) = 0,\ \lambda \in (0,1),
\end{equation*}
where $f$ is \eqref{eq:f}.

\section{Convergence proofs} \label{sec:gauge_convergence}
Consider a \MICP given by \eqref{eq:micp_gen} with $C$ represented as
\eqref{eq:std_rep}.
Let $f$ be defined as in \eqref{eq:f}.
As mentioned above, the ESH algorithm~\cite{Veinott1967,
KronqvistLundellWesterlund2016} computes an interior point of $C$ (which we will
assume it to be 0) and performs a line search between $\xlp \notin C$ and 0 to
find a point on the boundary.
It computes a gradient cut at the boundary point, solve the relaxation
again, and repeat the process.
From our previous discussion, computing a gradient cut at the boundary point is
equivalent computing a gradient cut at $\tfrac{\xlp}{\varphi_C(\xlp)}$.
Therefore, the generated cuts are $f(\tfrac{\xlp}{\varphi_C(\xlp)}) + v(x
- \tfrac{\xlp}{\varphi_C(\xlp)}) \leq 0$, where $v \in \partial
f(\tfrac{\xlp}{\varphi_C(\xlp)})$.

To prove the convergence of the ESH algorithm, Veinott~\cite{Veinott1967} and
Kronqvist et al.~\cite{KronqvistLundellWesterlund2016} use tailored arguments.
Here we show that the convergence of the algorithm follows from the convergence
of Kelley's cutting plane algorithm (KCP)~\cite{J.E.Kelley1960}
We note that when $C$ is represented by a convex non-differentiable function,
the KCP algorithm still converges.
One needs to replace gradients by subgradients and one can use any
subgradient~\cite{HorstTuy1990}.
Therefore, given that $\varphi_C(x)$ is a convex function, we know that KCP
converges when applied to $\min \{c^T x \st \varphi_C(x) \leq 1\}$.
Thus, in order to prove that ESH converges, it is sufficient to show that the cutting planes
generated by ESH can also be generated by KCP.

We first prove that the normals of (normalized) supporting valid inequalities
are subgradients of the gauge function at the supporting point.
\begin{lemma} \label{lemma:subgradient_gauge}
  Let $\alpha^T x \leq 1$ be a valid and supporting inequality for $C$.
  Let $\xbdr \in \partial C$ be the point where it supports $C$, i.e., $\alpha^T
  \xbdr = 1$.
  Then $\alpha \in \partial \varphi_C(\xbdr)$
\end{lemma}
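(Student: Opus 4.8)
The plan is to verify directly that $\alpha$ lies in $\partial\varphi_C(\xbdr)$ using the definition of the subdifferential recorded in the notation section, i.e.\ that $\varphi_C(\xbdr) + \alpha^T(x-\xbdr) \le \varphi_C(x)$ for all $x \in \RR^n$. First I would simplify the left-hand side: since $\xbdr \in \partial C$, \Cref{prop:gauge_properties} gives $\varphi_C(\xbdr) = 1$, and by hypothesis $\alpha^T\xbdr = 1$, so the inequality to be proved collapses to the single pointwise estimate
\[
  \alpha^T x \le \varphi_C(x) \quad\text{for all } x \in \RR^n.
\]
Thus the whole lemma reduces to showing that a normalized valid inequality $\alpha^T x \le 1$ for $C$ is automatically dominated by the gauge of $C$.

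For that estimate I would use positive homogeneity of $\varphi_C$ together with the sublevel-set description of $C$. Fix $x \in \RR^n$ and let $t > \varphi_C(x)$ be arbitrary; note $t > 0$ because $\varphi_C \ge 0$. Then $\varphi_C(x/t) = \varphi_C(x)/t < 1$, so $x/t \in C$ by the identity $C = \{x \st \varphi_C(x) \le 1\}$ from \Cref{prop:gauge_properties}. Validity of $\alpha^T x \le 1$ on $C$ gives $\alpha^T(x/t) \le 1$, hence $\alpha^T x \le t$. Letting $t \downarrow \varphi_C(x)$ yields $\alpha^T x \le \varphi_C(x)$. This one computation covers uniformly both the generic case $\varphi_C(x) > 0$ (rescaling $x$ onto $\partial C$) and the degenerate case $\varphi_C(x) = 0$ (where $C$ contains the entire ray $\{sx \st s \ge 0\}$, so validity forces $\alpha^T x \le 0$). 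Combined with the reduction of the first paragraph, this establishes the defining inequality of $\partial\varphi_C(\xbdr)$ and finishes the proof.

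I do not expect a genuine obstacle here; the argument is short once the right normalization is used. The only points needing a little care are that positive homogeneity is applied with a strictly positive scalar and that the description $C = \{\varphi_C \le 1\}$ is available — both are guaranteed by \Cref{prop:gauge_properties}, which applies since $C$ is closed, convex, and $0 \in \mathring C$. The ``pick $t > \varphi_C(x)$ and take the infimum'' phrasing is what lets us avoid an explicit case split on whether $\varphi_C(x)$ is zero.
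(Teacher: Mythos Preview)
Your proof is correct and follows essentially the same approach as the paper: reduce to the estimate $\alpha^T x \le \varphi_C(x)$ via $\varphi_C(\xbdr)=1$ and $\alpha^T\xbdr=1$, then use positive homogeneity plus validity on $C$. The only difference is cosmetic: the paper handles the cases $\varphi_C(x)>0$ and $\varphi_C(x)=0$ separately, whereas your ``pick $t>\varphi_C(x)$ and let $t\downarrow\varphi_C(x)$'' device treats both at once.
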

\begin{proof}
  We need to show that $\varphi_C(\xbdr) + \alpha^T (x - \xbdr) \leq
  \varphi_C(x)$ for every $x$.
  Note that since $\xbdr \in \partial C$, we have that $\varphi_C(\xbdr) = 1$
  and we just have to prove that $\alpha^T x \leq \varphi_C(x)$

  When $x$ is such that $\varphi_C(x) > 0$, we have $\tfrac{x}{\varphi_C(x)}
  \in C$.
  Due to the validity of $\alpha^T x \leq 1$, it follows that $\alpha^T
  \tfrac{x}{\varphi_C(x)} \leq 1$.

  Now let $x$ be such that $\varphi_C(x) = 0$.
  Then, $\varphi_C(\lambda x) = 0$ for every $\lambda > 0$, i.e., $\lambda x \in
  C$ for every $\lambda > 0$.
  Hence, $\alpha^T (\lambda x) \leq 1$ for every $\lambda > 0$ which implies
  that $\alpha^T x \leq 0 = \varphi_C(x)$.
\end{proof}

Now we prove that the inequalities generated by the ESH algorithm can also be
generated by KCP algorithm, implying the convergence of the ESH algorithm.
\begin{thm}
  Consider a \MICP given by \eqref{eq:micp_gen} with $C$ represented as
  \eqref{eq:std_rep} such that $0 \in \mathring C$ and \eqref{eq:condition}
  holds.
  Let $f$ be defined as in \eqref{eq:f} and let $\xlp \notin C$ be the current
  relaxation solution to separate.
  Let $f(\tfrac{\xlp}{\varphi_C(\xlp)}) + v(x - \tfrac{\xlp}{\varphi_C(\xlp)})
  \leq 0$, with $v \in \partial f(\tfrac{\xlp}{\varphi_C(\xlp)})$, be the
  inequality generated by the ESH algorithm using $0$ as the interior point.
  Then KCP applied to $\min \{c^T x \st \varphi_C(x) \leq 1\}$ can generate the
  same inequality.
\end{thm}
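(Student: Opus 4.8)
The plan is to show that the ESH cut, after rescaling, coincides with a gradient cut of $\varphi_C$ at the point $\xlp$ itself, which is exactly the cut KCP would produce when separating $\xlp$ from $\{x \st \varphi_C(x) \le 1\}$. First I would set $\xbdr = \tfrac{\xlp}{\varphi_C(\xlp)}$ and abbreviate $t = \varphi_C(\xlp) > 1$ (strictly, since $\xlp \notin C$). By \Cref{prop:use_gauge} the ESH cut is a valid supporting inequality for $C$, and since $f(\xbdr) = 0$ it reads $v^T(x - \xbdr) \le 0$ with $v \in \partial f(\xbdr)$; note $v \ne 0$ because \eqref{eq:condition} guarantees the active gradient does not vanish, and by the construction $v^T \xbdr$ equals some positive constant — more precisely, since $v^T(x-\xbdr)\le 0$ supports $C$ at $\xbdr$, rescaling gives a supporting inequality of the form $\alpha^T x \le 1$ with $\alpha = v / (v^T \xbdr)$ and $\alpha^T \xbdr = 1$. (That $v^T\xbdr>0$ follows because $0 \in \mathring C$ forces $v^T 0 < v^T \xbdr$ for a supporting hyperplane at a boundary point.)

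Next I would invoke \Cref{lemma:subgradient_gauge}: since $\alpha^T x \le 1$ is valid and supports $C$ at $\xbdr$, we get $\alpha \in \partial \varphi_C(\xbdr)$. Then I would use positive homogeneity of $\varphi_C$ to transport the subgradient from $\xbdr$ to $\xlp$: because $\xlp = t\,\xbdr$ with $t > 0$ and $\varphi_C$ is sublinear, $\partial \varphi_C$ is constant along rays through the origin, so $\alpha \in \partial \varphi_C(\xlp)$ as well. (If the paper has not explicitly recorded this fact, I would justify it in one line from the subgradient inequality $\varphi_C(\xbdr) + \alpha^T(x-\xbdr) \le \varphi_C(x)$ together with $\varphi_C(\xlp) = t\varphi_C(\xbdr)$ and the substitution $x \mapsto x/t$.) Consequently $\alpha$ is a legitimate choice of subgradient for the KCP separation step applied at $\xlp$.

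The gradient cut — in the subgradient sense — that KCP generates when separating $\xlp$ from $\{x \st \varphi_C(x) \le 1\}$ using the subgradient $\alpha$ is $\varphi_C(\xlp) + \alpha^T(x - \xlp) \le 1$, i.e. $t + \alpha^T x - \alpha^T(t\xbdr) \le 1$, i.e. $\alpha^T x \le 1$ since $\alpha^T\xbdr = 1$. So the KCP cut is exactly $\alpha^T x \le 1$, which is the rescaled ESH cut $v^T(x - \xbdr) \le 0$. Since KCP is permitted to use any subgradient (as recalled in the paragraph preceding \Cref{lemma:subgradient_gauge}, citing \cite{HorstTuy1990}), this shows KCP can generate the same inequality, completing the proof. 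The main obstacle I anticipate is purely bookkeeping: carefully matching the normalization conventions (the ESH cut as written has right-hand side $0$ and is stated with $f$ rather than $\varphi_C$, whereas the gauge formulation has right-hand side $1$), and making sure the sign argument $v^T\xbdr > 0$ is cleanly justified from $0 \in \mathring C$; the homogeneity transport of the subgradient is the one genuinely substantive step, but it is short.
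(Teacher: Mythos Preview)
Your proposal is correct and follows essentially the same route as the paper: normalize the ESH cut to the form $\alpha^T x \le 1$, invoke \Cref{lemma:subgradient_gauge} to get $\alpha \in \partial\varphi_C(\xbdr)$, then use positive homogeneity to transport the subgradient to $\xlp$. The only cosmetic differences are that the paper obtains the needed positivity via \Cref{lemma:supp_sepa} (concluding $v^T\xlp>0$, which is equivalent to your $v^T\xbdr>0$ since $\xbdr=\xlp/t$), and that you spell out explicitly how the KCP cut $\varphi_C(\xlp)+\alpha^T(x-\xlp)\le 1$ collapses to $\alpha^T x\le 1$, which the paper leaves implicit.
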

\begin{proof}
  Let us manipulate the inequality obtained by the ESH algorithm.
  First, notice that $f(\tfrac{\xlp}{\varphi_C(\xlp)}) = 0$ and so the
  inequality reads as $v^T x \leq v^T \tfrac{\xlp}{\varphi_C(\xlp)}$.
  Since \eqref{eq:condition} holds, $v \neq 0$.
  Furthermore, by \Cref{lemma:supp_sepa}, $\xlp$ is cut off by the inequality,
  i.e., $v^T \xlp > v^T \tfrac{\xlp}{\varphi_C(\xlp)}$
  This, together with the fact that $\varphi_C(\xlp) > 1$, implies that $v^T \xlp
  > 0$.
  Summarizing, the inequality obtained by the ESH algorithm can be rewritten as
  \[
    \left( \frac{\varphi_C(\xlp)}{v^T \xlp}v \right)^T x \leq 1.
  \]

  \Cref{lemma:subgradient_gauge} implies that $\tfrac{\varphi_C(\xlp)}{v^T
  \xlp}v \in \partial \varphi_C(\tfrac{\xlp}{\varphi_C(\xlp)})$.
  Since $\varphi_C$ is positively homogeneous, $\partial
  \varphi_C(\tfrac{\xlp}{\varphi_C(\xlp)})= \partial \varphi_C(\xlp)$.
  Hence, the same cut can be generated by KCP algorithm applied to $\min \{c^T
  x \st \varphi_C(x) \leq 1\}$ when separating $\xlp$.
\end{proof}

\section{Convex programs represented by non-convex functions}
\label{sec:nonconvex}

In this section we consider problem \eqref{eq:micp_gen} with $C$ represented as
\[
  C = \{ x \st g_j(x) \le 0, j \in J \},
\]
where the functions $g_j$ are differentiable, but not necessarily convex.
As mentioned in the introduction, convex problems represented by non-convex
functions have been considered in~\cite{Dutta2011,Lasserre2009,Lasserre2011}.

The next proposition shows that, under \eqref{eq:condition}, the ESH algorithm
works without modification in this context.
Therefore, its convergence is guaranteed by the convergence of KCP algorithm.
Essentially, we show that with the given representation of $C$ it
is possible to evaluate its gauge function and its subgradients.

Recall that $J_0(x) = \{j \in J \st g_j(x) = 0\}$.
\begin{prop}
  Let $C = \{ x \st g_j(x) \le 0, j \in J\}$ such that $0 \in \mathring C$
  and the function $g_j$ are differentiable.
  Let $\varphi_C$ be the gauge function of $C$.
  For $\xlp \notin C$, define $\xbdr = \frac{\xlp}{\varphi_C(\xlp)}$ and assume
  that \eqref{eq:condition} holds.
  Then, any gradient cut of $g_j$ at $\xbdr$ for any $j \in J_0(\xbdr)$ yields
  a valid supporting inequality for $C$ that separates $\xlp$.
\end{prop}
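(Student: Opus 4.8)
The plan is to reduce this to the convex case already handled in \Cref{prop:use_gauge} and \Cref{lemma:subgradient_gauge}, by showing that the only facts about the $g_j$ that were used there are consequences of differentiability together with \eqref{eq:condition}, not of convexity per se. The key observation is that even when the $g_j$ are not convex, the set $C$ is convex by hypothesis, so its gauge function $\varphi_C$ is sublinear and \Cref{prop:gauge_properties} still applies: we have $C = \{x \st \varphi_C(x) \le 1\}$ and $\partial C = \{x \st \varphi_C(x) = 1\}$. Thus $\xbdr = \tfrac{\xlp}{\varphi_C(\xlp)}$ is well defined and lies on $\partial C$ exactly as in \Cref{prop:gauge_boundary}, whose proof uses only positive homogeneity of $\varphi_C$.

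The main work is to establish that for $j \in J_0(\xbdr)$ the gradient cut $g_j(\xbdr) + \nabla g_j(\xbdr)^T(x - \xbdr) \le 0$, i.e.\ $\nabla g_j(\xbdr)^T(x - \xbdr) \le 0$, is valid for $C$ and supports it at $\xbdr$. Supporting is immediate once validity is shown, since $\xbdr \in \partial C$ satisfies the inequality with equality. For validity, the idea is that $\xbdr \in \partial S_j$ where $S_j = \{x \st g_j(x) \le 0\} \supseteq C$, and that a differentiable function with $\nabla g_j(\xbdr) \neq 0$ cannot attain a value $\le 0$ strictly on one side of its tangent hyperplane unless that hyperplane is valid for the sublevel set --- but this needs the convexity of $C$, not of $S_j$. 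Concretely, I would argue: suppose some $x^* \in C$ violates the cut, so $\nabla g_j(\xbdr)^T(x^* - \xbdr) > 0$. Since $0 \in \mathring C$ and $C$ is convex, for small $t > 0$ the point $\xbdr + t(x^* - \xbdr)$ lies in $\mathring C$ hence satisfies $g_j < 0$ there (using $g_j(\xbdr) = 0$ and \eqref{eq:condition} to rule out $\xbdr$ being a local interior-type critical point), while moving in the direction $x^* - \xbdr$ increases $g_j$ to first order because $\nabla g_j(\xbdr)^T(x^* - \xbdr) > 0$. A careful directional-derivative argument combined with the fact that $C$ is convex and $\xbdr$ is a boundary point on the segment $[0, x^*]$-side yields a contradiction. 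The cleanest route is probably: validity of the cut for $C$ is equivalent to $\nabla g_j(\xbdr)$ being a normal vector to $C$ at $\xbdr$ in the sense of convex analysis, and this follows because, by differentiability, $\nabla g_j(\xbdr)$ is the (unique up to scaling) outer normal to the smooth hypersurface $\{g_j = 0\}$ at $\xbdr$, which locally bounds $C$; condition \eqref{eq:condition} guarantees this normal is nonzero so the supporting hyperplane is genuine. Then separation of $\xlp$ follows verbatim from \Cref{lemma:supp_sepa}, since $\xbdr$ lies on the segment joining $0$ and $\xlp$ by construction.

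The hard part will be the validity step, because without convexity of $g_j$ one cannot simply invoke the subgradient inequality $g_j(\xbdr) + \nabla g_j(\xbdr)^T(x-\xbdr) \le g_j(x)$. The argument must genuinely use that $C$ itself is convex and that \eqref{eq:condition} forces the gradient to point ``outward''. I expect the proof to proceed by a local analysis near $\xbdr$: since $g_j(\xbdr) = 0$ and $\nabla g_j(\xbdr) \neq 0$, the implicit function theorem makes $\{g_j = 0\}$ a smooth hypersurface near $\xbdr$ with $C$ lying (locally) in the half-space $\{x \st \nabla g_j(\xbdr)^T(x - \xbdr) \le 0\}$; because $C$ is convex and this containment holds locally, it holds globally, which is exactly the validity of the cut. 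Everything else --- well-definedness of $\xbdr$, the supporting property, and the separation of $\xlp$ --- is then a direct reuse of the results already proved in \Cref{sec:gauge}.
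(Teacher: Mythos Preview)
Your proposal is correct, and the directional-derivative contradiction you sketch---assume some $x^* \in C$ with $\nabla g_j(\xbdr)^T(x^* - \xbdr) > 0$, note this forces $g_j > 0$ just past $\xbdr$ along the segment toward $x^*$, and contradict convexity of $C$ which keeps that segment inside $C$ where $g_j \le 0$---is exactly the paper's argument. The implicit-function-theorem route you label ``cleanest'' is unnecessary overhead: the paper's validity step is three lines using nothing beyond the definition of the directional derivative and convexity of $C$, and then separation is \Cref{lemma:supp_sepa} just as you say.
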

\begin{proof}
  By \Cref{prop:gauge_boundary} we have that $\xbdr \in \partial C$.
  Let $j \in J_0(\xbdr)$.
  The gradient cut of $g_j$ at $\xbdr$ is $\nabla g_j(\xbdr) (x - \xbdr) \le 0$.

  We first show it is valid, that is, $\forall y \in C, \nabla g_j(\xbdr) (y
  - \xbdr) \le 0$.
  If this is not the case, then there is $y_0 \in C$ for which $\nabla
  g_j(\xbdr) (y_0 - \xbdr) > 0$, i.e., the directional derivative of $g_j$ at
  $\xbdr$ in the direction $y_0 - \xbdr$ is positive.
  Then, there is a small enough $\lambda > 0$ such that $g_j(\xbdr + \lambda
  (y_0 - \xbdr)) > 0$.
  However, the convexity of $C$ implies that $\xbdr + \lambda (y_0 - \xbdr) \in
  C$ for $\lambda \in [0,1]$.
  This contradicts the fact that $g_j(\xbdr + \lambda (y_0 - \xbdr)) > 0$.

  The fact that it separates follows from \Cref{lemma:supp_sepa}.
\end{proof}

This result extends the algorithm of Veinott~\cite{Veinott1967} to further
representations of the set $C$.
The proof of the validity of the cut is the same as the `only if' part
of~\cite[Lemma 2.2]{Lasserre2009}.

\begin{remark}
  Any representation of a convex set $C$ as $\{ x \in \RR^n \st g_j(x) \le 0, j \in
  J\}$ yields a way to evaluate its gauge function, namely,
  \[
    \varphi_C (x) = \inf \left\{ t > 0 \st  \max_j g_j(\frac{x}{t})
    = 0 \right\}.
  \]
  This can be solved using a line search.
  However, what is more important is to be able to compute subgradients.\\
  Given any method to compute subgradients of the gauge function, we can
  apply KCP algorithm using the implicitly defined gauge function.
  This allows us, for example, to drop the requirement that the gradients of the
  active constraints do not vanish at the boundary for solving the problem
  considered in this section.
  This algorithm is more general than the one proposed by
  Lasserre~\cite{Lasserre2011}, but it will not necessarily converge to a KKT
  point of the original problem.
\end{remark}

\section{Concluding remarks} \label{sec:conclusions}
In this paper, we have shown that the extended supporting hyperplane algorithm
studied by Veinott~\cite{Veinott1967} and Kronqvist et
al.~\cite{KronqvistLundellWesterlund2016} is identical to Kelley's classic
cutting plane algorithm applied to a suitable reformulation of the
problem.
We used this new perspective in order to prove the convergence of the method for
the larger class of problems with convex feasible regions represented by
non-convex differentiable constraints.
More generally, the algorithm extends to any representation of a
convex set that allows to compute subgradients of its gauge function.
These theoretical results bear relevance in practice, as the experimental
results in~\cite{KronqvistLundellWesterlund2016} have already demonstrated the
computational benefits of the supporting hyperplane algorithm in comparison to
alternative state-of-the-art solving methods.

\bibliographystyle{abbrv}
\bibliography{gauge}

\end{document}